\newfont{\theoremfont}{cmssbx12 scaled 875}
\newtheoremstyle{Eins}{\topsep}{\topsep}{\itshape}{}{\theoremfont}{.}{5pt}{\thmname{#1}\thmnumber{ #2}\thmnote{ #3}}
\newtheoremstyle{Zwei}{\topsep}{\topsep}{}{}{\theoremfont}{.}{5pt}{\thmname{#1}\thmnumber{ #2}\thmnote{ #3}}
\newcommand{\myfootnote}[1]{
 \renewcommand{\thefootnote}{}
 \footnotetext{#1}
 \renewcommand{\thefootnote}{\arabic{footnote}}
 }
\theoremstyle{Eins}
\newtheorem{thm}{Theorem}[section]
\theoremstyle{Zwei}
\newtheorem{lem}[thm]{Lemma}
\newtheorem*{ack}{Acknoledgements}
\newtheorem{dfn}[thm]{Definition}
\newtheorem{rem}[thm]{Remark}
\begin{document}
\title{Notes on a proof of Bonet, Engli\v{s}\\and Taskinen}
\author{Sven-Ake Wegner}
\maketitle
\begin{abstract}\noindent{}We give a proof of the result \cite[Theorem 5]{BoEnTa2005} of Bonet, Engli\v{s}, Taskinen filling in several details and correcting some flaws.
\end{abstract}

\myfootnote{2010 Mathematical Subject Classification: primary 46E10, secondary 46A13, 46M40.}

\section{Preliminaries}

Let in the sequel $H(\mathbb{D})$ denote the space of all holomorphic functions on the open unit disc $\mathbb{D}$. A \textit{weight} is a strictly positive and continuous function on $\mathbb{D}$. For a weight $v$ we consider the space
$$
Hv_0(\mathbb{D}):=\big\{f\in H(\mathbb{D})\:;\:v|f| \text{ vanishes at } \infty \text{ on } \mathbb{D}\big\}
$$
which is a Banach space w.r.t.~the norm $\|f\|_{v}:=\sup_{z\in\mathbb{D}}v(z)|f(z)|$ for $f\in H(v)_0(\mathbb{D})$. In the sequel we will use the following well-known fact; for the sake of completeness we give a proof.

\begin{rem}\label{LOG-approx} Let $v$ be a radial weight which is decreasing on $[0,1[$. Assume that $(r_n)_{n\in\mathbb{N}}\subseteq[0,1[$ is a sequence with $r_n\nearrow1$ as $n\rightarrow\infty$. Let $g\in Hv_0(\mathbb{D})$ and put $g_n(z):=g(r_nz)$ for $z\in\mathbb{D}$. Then $g_n\rightarrow g$ holds in $Hv_0(\mathbb{D})$.
\end{rem}
\begin{proof}
We note first that $g_n\in Hv_0(\mathbb{D})$ holds. Moreover, $g_n\rightarrow g$ holds w.r.t.~the compact open topology: For $K\subseteq\mathbb{D}$ compact we select $0<R<1$ such that $K\subseteq \overline{B}_{R}(0)$ and estimate
\begin{align*}
\sup_{z\in K}|g(z)-g(r_nz)| & \leqslant \sup_{z\in K}\max_{\xi\in[r_nz,z]}|g'(\xi)||z-r_nz|\\
&\leqslant(1-r_n)\sup_{z\in K}\max_{\xi\in[r_nz,z]}|g'(\xi)|\\
&\leqslant(1-r_n)\sup_{z\in\overline{B}_{R}(0)}|g'(z)|\stackrel{n\rightarrow\infty}{\longrightarrow}0
\end{align*}
which yields the desired co-convergence.
\smallskip
\\Let now $\varepsilon>0$ be given. Since $g\in Hv_0(\mathbb{D})$ there exists $0<R_0<1$ such that $v(z)|g(z)|\leqslant\frac{\varepsilon}{3}$ for each $|z|\geqslant R_0$. We select $0<R_0<R_1<1$. Then in particular $\sup_{|z|\geqslant R_1}v(z)|g(z)|\leqslant\frac{\varepsilon}{3}$ holds. By the above we may select $N$ such that $\sup_{|z|\leqslant R_1}v(z)|g(z)-g(r_nz)|\leqslant\frac{\varepsilon}{3}$ holds for $n\geqslant N$. By increasing $N$ we may assume that $r_nR_1\geqslant R_0$ for $n\geqslant N$. Now we get
\begin{align*}
\sup_{z\in\mathbb{D}}v(z)|g(z)-g(r_nz)| & \leqslant \sup_{|z|\leqslant R_1}v(z)|g(z)-g(r_nz)|+\sup_{|z|\geqslant R_1}v(z)|g(z)-g(r_nz)|\\
&\leqslant {\textstyle\frac{\varepsilon}{3}}+\sup_{|z|\geqslant R_1}v(z)|g(z)|+\sup_{|z|\geqslant R_1}v(z)|g(r_nz)|\\
&\leqslant {\textstyle\frac{\varepsilon}{3}}+{\textstyle\frac{\varepsilon}{3}}+\sup_{|\xi|\geqslant r_nR_1}v(\xi)|g(\xi)|\\
&\leqslant {\textstyle\frac{\varepsilon}{3}}+{\textstyle\frac{\varepsilon}{3}}+\sup_{|\xi|\geqslant R_0}v(\xi)|g(\xi)|\\
&\leqslant {\textstyle\frac{\varepsilon}{3}}+{\textstyle\frac{\varepsilon}{3}}+{\textstyle\frac{\varepsilon}{3}}=\varepsilon
\end{align*}
for $n\geqslant N$.
\end{proof}

\section{The result of Bonet, Engli\v{s} and Taskinen}

Let from now on $\mathcal{V}=(v_k)_{k\in\mathbb{N}}$ be a decreasing sequence of strictly positive and continuous functions (weights) on the unit disc $\mathbb{D}$ of the complex plane. For every $n\in\mathbb{N}$ we put $r_{n}:=1-2^{-2^{n}}$, $r_0:=0$ and $I_n:=[r_{n},r_{n+1}]$.

\begin{dfn} We say that the sequence $\mathcal{V}=(v_{k})_{k\in\mathbb{N}}$ satisfies condition (LOG) if each weight in the sequence is radial and approaches monotonically $0$ as $r\nearrow1$ and there exist constants $0<a<1<A$ such that the conditions
\begin{compactitem}\vspace{5pt}
\item[(LOG 1)] $A\cdot v_{k}(r_{n+1})\geqslant v_{k}(r_{n})$ and\vspace{3pt}
\item[(LOG 2)] $v_{k}(r_{n+1})\leqslant a\cdot v_{k}(r_{n})$
\end{compactitem}\vspace{5pt}
are satisfied for all $n$ and $k\in\mathbb{N}$.
\end{dfn}

We consider the \textit{weighted LB-space of holomorphic functions} $\mathcal{V}_0H(\mathbb{D})\!=\!\operatorname{ind}_{k}H(v_k)_0(\mathbb{D})$ and its \textit{projective hull} $H\overline{V}_{\!0}(\mathbb{D})=\operatorname{proj}_{\overline{v}\in\overline{V}}H(\overline{v})_0(\mathbb{D})$, where
$$
\overline{V}:=\big\{\overline{v}\:;\:\overline{v} \text{ is a weight on }\mathbb{D}\text{ such that }\forall\:k\;\exists\:\alpha_k>0\colon \overline{v}\leqslant {\textstyle\inf_{k}}\alpha_kv_k\big\}.
$$
Projective hulls were introduced by Bierstedt, Meise, Summers in the seminal article \cite{BMS1982} and are the starting point for the so-called \textit{projective description problem}. For details we refer to the latter article and to the survey \cite{Bierstedt2001} of Bierstedt. At this point we only mention that in the terminology of \cite{Bierstedt2001,BMS1982} \textit{projective description holds} for the space $\mathcal{V}_0H(\mathbb{D})$, if $\mathcal{V}_0H(\mathbb{D})\subseteq H\overline{V}_{\!0}(\mathbb{D})$ is a topological subspace.
\smallskip
\\Let us now state the result of Bonet, Engli\v{s}, Taskinen, which states that projective description holds if we assume the sequence $\mathcal{V}$ to satisfy condition (LOG).

\begin{thm}\label{LOG-main-result}(\cite[Theorem 5]{BoEnTa2005}) If the weight system $\mathcal{V}$ satisfies condition (LOG), then $\mathcal{V}_0H(\mathbb{D})$ is a topological subspace of $H\overline{V}_{\!0}(\mathbb{D})$.
\end{thm}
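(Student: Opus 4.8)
The plan is to compare the two topologies on $\mathcal{V}_0H(\mathbb{D})$ directly. The inclusion $\mathcal{V}_0H(\mathbb{D})\hookrightarrow H\overline{V}_{\!0}(\mathbb{D})$ is continuous without any extra hypothesis: every $\overline{v}\in\overline{V}$ satisfies $\overline{v}\leqslant\alpha_kv_k$ for suitable $\alpha_k>0$, so $\|f\|_{\overline{v}}\leqslant\alpha_k\|f\|_{v_k}$ shows that each step map $H(v_k)_0(\mathbb{D})\to H(\overline{v})_0(\mathbb{D})$ is bounded, whence the projective topology is coarser than the (LB)-topology. (The same inequality yields the algebraic inclusion, since $\overline{v}|f|\leqslant\alpha_kv_k|f|$ also vanishes at $\infty$.) Thus the entire content of the theorem is the reverse estimate: I must show that the projective topology is in fact \emph{finer} on $\mathcal{V}_0H(\mathbb{D})$, i.e.\ that every $0$-neighbourhood of the (LB)-space contains the trace of a projective $0$-neighbourhood.

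Since a basis of $0$-neighbourhoods of $\mathcal{V}_0H(\mathbb{D})$ is given by the closed absolutely convex hulls $U=\overline{\Gamma}\big(\bigcup_k\varepsilon_kB_k\big)$, where $B_k$ is the closed unit ball of $H(v_k)_0(\mathbb{D})$ and $(\varepsilon_k)_k$ runs through the positive (w.l.o.g.\ decreasing) sequences, the task reduces to the following: given $(\varepsilon_k)_k$, construct a weight $\overline{v}\in\overline{V}$ and a $\delta>0$ such that $\|f\|_{\overline{v}}\leqslant\delta$ forces $f\in U$. To place $f$ in $U$ it suffices to exhibit a decomposition $f=\sum_kf_k$ with $f_k\in H(v_k)_0(\mathbb{D})$, convergent in $\mathcal{V}_0H(\mathbb{D})$, and $\sum_k\|f_k\|_{v_k}/\varepsilon_k\leqslant1$.

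For the decomposition I would use the dilation structure underlying Remark~\ref{LOG-approx}. Writing $f_{(n)}(z):=f(r_nz)$, Remark~\ref{LOG-approx} gives $f_{(n)}\to f$ in every $H(v_k)_0(\mathbb{D})$, so that $f=f(0)+\sum_{n\geqslant1}\big(f_{(n)}-f_{(n-1)}\big)$ converges in each step Banach space. Each term $g_n:=f_{(n)}-f_{(n-1)}$ is holomorphic on $\{|z|<1/r_n\}\supseteq\overline{\mathbb{D}}$, hence bounded on $\mathbb{D}$ and therefore a member of \emph{every} $H(v_k)_0(\mathbb{D})$; the constant $f(0)$ lies there too. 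This freedom lets me assign each $g_n$ to an index $k=k(n)$, chosen increasing in $n$, and set $f_k:=\sum_{n:\,k(n)=k}g_n$. The weight $\overline{v}$ will be built, radially and by interpolation, so as to be comparable to a suitably $\varepsilon$-scaled copy of $v_{k(n)}$ on the annulus $I_n$; conditions (LOG~1) and (LOG~2) then control $\overline{v}$ across neighbouring annuli, so that $\overline{v}\leqslant\alpha_kv_k$ holds for every $k$ (only finitely many annuli with $k(n)<k$ contribute to $\alpha_k$) and hence $\overline{v}\in\overline{V}$.

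The main obstacle is the norm estimate $\|g_n\|_{v_{k(n)}}\lesssim\varepsilon_{k(n)}\,2^{-n}\delta$ (or any summable bound), which is exactly what makes $\sum_k\|f_k\|_{v_k}/\varepsilon_k\leqslant1$. Here one starts from the mean-value/Cauchy bound $|f(r_nz)-f(r_{n-1}z)|\leqslant(r_n-r_{n-1})|z|\max_{\xi\in[r_{n-1}z,\,r_nz]}|f'(\xi)|$ together with a Cauchy estimate for $f'$ on a circle of radius $\sim(1-|\xi|)$, turning $\|f\|_{\overline{v}}\leqslant\delta$ into a pointwise bound. Because the Cauchy estimate degenerates as $|z|\to1$, the supremum defining $\|g_n\|_{v_{k(n)}}$ must be split according to the annulus in which $|z|$ lies, and it is precisely the two-sided geometric control $1/A\leqslant v_k(r_{m+1})/v_k(r_m)\leqslant a$ furnished by (LOG) that produces, after summation over annuli, the required decay in $n$. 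Carrying out this bookkeeping — choosing the assignment $k(n)$ and the constants defining $\overline{v}$ so that all three demands ($\overline{v}\in\overline{V}$, convergence of $\sum_kf_k$, and $\sum_k\|f_k\|_{v_k}/\varepsilon_k\leqslant1$) hold simultaneously — is the technical heart of the argument, and is presumably where the details have to be supplied and the flaws of the original proof repaired.
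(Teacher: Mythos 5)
Your scaffolding coincides with the paper's: reduction to a single absolutely convex $0$-neighbourhood, telescoping $f$ along the dilations $f_{(n)}$, distributing the blocks $g_n$ over the steps $H(v_k)_0(\mathbb{D})$, building $\overline{v}$ from scaled copies of the $v_k$, and mean-value/Cauchy estimates with (LOG~1), (LOG~2) transferring bounds between the annuli $I_n$. But the step you yourself declare to be the technical heart is a genuine gap: the per-term bound $\|g_n\|_{v_{k(n)}}\lesssim\varepsilon_{k(n)}2^{-n}\delta$ --- indeed \emph{any} bound summable in $n$ --- is unattainable. The Cauchy-type decay (of order $2^{-2^{n-1}}$ against the increment $r_{n+1}-r_n\leqslant 2^{-2^n}$) is available only when $|z|$ stays a couple of annuli inside $r_n$; for $|z|\approx r_n$ the only estimate is $|g_n(z)|\leqslant 2\sup_{r\leqslant r_{n+1}}\overline{v}(r)^{-1}$, and there $v_{k(n)}(z)|g_n(z)|$ is genuinely of the order of the scale factor of $\overline{v}$ on $I_n$, with no decay in $n$. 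Nor can you load a factor $2^n$ into that scale: membership in $\overline{V}$ forces $\overline{v}\leqslant\alpha_kv_k$ for \emph{every} $k$, and (LOG) permits all $v_k$ to equal one logarithmically decreasing weight $v$, in which case $\overline{V}$ consists of weights $\leqslant\alpha v$ and no room is left. In that same degenerate case a lacunary series $f=\sum_j v(r_j)^{-1}z^{M_j}$ with $M_j\approx 2^{2^j}$ has $\|f\|_v$ bounded (one checks the overlaps are geometrically suppressed via (LOG~1)/(LOG~2)), while the block whose dilations straddle $r_j$ has $v$-norm bounded below for every $j$, since $r_j^{M_j}\approx e^{-1}$ but $r_{j+1}^{M_j}\approx1$. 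So $\sum_n\|g_n\|_v$ diverges: the telescoping series converges only conditionally, never absolutely, and your accounting $\sum_k\|f_k\|_{v_k}/\varepsilon_k\leqslant1$ via the triangle inequality cannot succeed for \emph{any} assignment $n\mapsto k(n)$, monotone or not.

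The missing idea, which is exactly the content of Lemma~\ref{lemma}, is that the assignment is not free and the summation is done pointwise rather than in norm. The paper takes $\overline{v}:=\inf_ka_k^{-1}v_k$ with $a_k<2^{-(k+2)}C^{-1}b_k$ and lets $k(n)$ be an index at which this infimum is attained at $r_n$ (in general not monotone); the groups $h_m=\sum_{n\in N_m}g_n$ with $N_m=\{n\leqslant\nu\,;\,k(n)=m\}$ are then estimated at each fixed $z$ with $|z|\in I_t$: the terms with $n\leqslant t+1$ carry a geometric factor $a^{t-n}$ from (LOG~2), the single term $n=t+2$ costs a fixed $A^2$ via (LOG~1), and the terms with $n>t+2$ carry $2^{-n}$ because $A^{n-t}2^{-2^{n-1}}\leqslant D2^{-n}$. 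The point is localization, not smallness: each $g_n$ is of full size only near its own annulus, and at any fixed $z$ all but $O(1)$ of the $n\in N_m$ are geometrically suppressed. This yields $\|h_m\|_{v_m}\leqslant Ca_m\leqslant 2^{-(m+2)}b_m$ \emph{uniformly in the cardinality of} $N_m$, which may contain arbitrarily many $n$, each of $v_m$-norm comparable to $a_m$, so that summing norms would give $|N_m|\cdot a_m$ and lose. Finally, the paper truncates at level $\nu$, placing the finite sum $f_{r_{\nu+1}}=g_0+\sum_mh_m$ in $B$ itself, and invokes Remark~\ref{LOG-approx} to conclude $f\in\overline{B}$ --- a cleaner closing move than your infinite-series version, though that part of your plan would also work with closed hulls.
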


\begin{proof} We put $D:=\max(\sum_{k\in\mathbb{N}}a^k,\,\sup_{n>t+2}2^{-n}A^{n-t}2^{-2^{n-1}})$, where $a$ and $A$ are the constants of (LOG 1) and (LOG 2). Now put $C:=2A^2(D+A^2)+ 4(A^2+2D)$.
\smallskip
\\For every $k\in\mathbb{N}$ we put
$$
U_k:=\{f\in H(v_k)_0(\mathbb{D})\:;\:\|f\|_{v_k}\leqslant1\}.
$$
Let a 0-neighborhood $B=\Gamma(\cup_{k\in\mathbb{N}}b_kU_k)$ in $\mathcal{V}_0(\mathbb{D})$ be given, where $\Gamma$ stands for the absolutely convex hull and $b_k$ is a positive constant for every $k$. Let us define the decreasing weight
$$
\overline{v}:=\inf_{k\in\mathbb{N}}a_k^{-1}v_k(z),
$$
with $a_k<2^{-(k+2)}C^{-1}b_k$. Then $\overline{v}$ is radial, continuous, non-increasing and the infimum is a minimum on compact subsets of $\mathbb{D}$, see Bierstedt, Meise, Summers \cite{BMS1982}. Assuming that $f\in\mathcal{V}_0H(\mathbb{D})$ satisfies $\|f\|_{\overline{v}}\leqslant1$, we show that $f\in\overline{B}$, where the closure is taken in $\mathcal{V}_0H(\mathbb{D})$. We define $f_{r_{n}}$ by $f_{r_{n}}(z)=f(r_{n}z)$. By \ref{LOG-approx} it is enough to show that $f_{r_n}\in B$ for sufficiently large $n\in\mathbb{N}$.
\smallskip
\\Since $\overline{v}$ is nonincreasing we get
\begin{align}
\inf_{|z|\in I_{n}}\overline{v}(z)=\overline{v}(r_{n+1})\geqslant \overline{v}(r_{n+2})=\inf_{|z|\in I_{n+1}}\overline{v}(z)\stackrel{\text{\tiny (LOG 1)}}{\geqslant} A^{-2}\overline{v}(r_{n}).
\end{align}
For every $n$ we pick $k(n)\in\mathbb{N}$ such that
\begin{align}
\overline{v}(r_{n})=a_{k(n)}^{-1}v_{k(n)}(r_{n})=a_{k(n)}^{-1}\sup_{|z|\in I_{n}}v_{k(n)}(z).
\end{align}
We fix $\nu\in\mathbb{N}$ and for every $m\in\mathbb{N}$ we define
$$
N_{m}:=\{n\in\mathbb{N}\:;\:n\leqslant \nu\text{ and } k(n)=m\}.
$$
Hence, the set $\{n\in\mathbb{N}\:;\:n\leqslant\nu\}$ is a disjoint union of the sets $N_m$; some of them may be empty sets. For each $n\geqslant1$ we put $g_{n}(z):=f(r_{n+1}z)-f(r_{n}z)$ and $g_0(z):=f(0)$. For $m\in\mathbb{N}$ we define
$$
h_{m}:=\sum_{n\in N_{m}}g_{n}
$$
if $N_m\not=\emptyset$ and $h_{m}=0$ otherwise. We have
\begin{align*}
(g_0+\sum_{m\in\mathbb{N}}h_m)(z) & = g_0(z)+\sum_{m\in\mathbb{N}}\sum_{n\in N_m}(f(r_{n+1}z)-f(r_nz))\\
& = f(0)+\sum_{n=0}^{\nu}(f(r_{n+1}z)-f(r_nz)) \\
& = f(0)+\sum_{n=0}^{\nu}f(r_{n+1}z)-\sum_{n=0}^{\nu}f(r_nz)\\
& = f(0)+\sum_{n=1}^{\nu}f(r_nz)+f(r_{\nu+1}z)-\sum_{n=1}^{\nu}f(r_nz)-f(0)\\
&=f(r_{\nu+1}z).
\end{align*}
The constant function $g_0$ belongs to $H(v_{k(0)})_0(\mathbb{D})$ and
$$
|g_0(z)|=|f(0)|\leqslant a_{k(0)}v_{k(0)}(0)^{-1}
$$
holds by (2), hence $g_0\in 2^{-2}2^{-k(0)}b_{k(0)}U_{k(0)}$. The main part of the proof is to show the following lemma.

\begin{lem}\label{lemma}(\cite[Lemma 2]{BoEnTa2005}) In the notation above $h_m\in 2^{-(m+2)}b_mU_m$ holds for all $m\in\mathbb{N}$.
\end{lem}

The lemma finishes the proof of \ref{LOG-main-result}: Since $h_m\in 2^{-(m+2)}b_mU_m$, we have
\begin{align*}
f_{r_{\nu}}=g_0+\sum_{m\in\mathbb{N}}h_m&\in 2^{-2}2^{-k(0)}b_{k(0)}U_{k(0)}+ \sum_{m\in\mathbb{N}}2^{-m}b_mU_m\\
&\subseteq 2^{-1}\sum_{m\in\mathbb{N}}2^{-m}b_mU_m\\
&\subseteq \Gamma(\cup_{m\in\mathbb{N}}b_mU_m)\\
&=B.\qedhere
\end{align*}
\end{proof}
\medskip
\begin{proof}\textit{(of Lemma 2.3)} We fix $m\in\mathbb{N}$, pick $n\in N_m$ and estimate $|g_n(z)|$ for different $z$.
\begin{compactitem}
\item[1.] Assume first $|z|\geqslant r_{n-1}$ (where we put $r_{n-1}:=r_0$ for $n=0$).\vspace{3pt}
\begin{compactitem}
\item[a.] Let $n\geqslant2$. Then we have
\begin{align*}
|r_{n}z|=|r_{n}||z|\geqslant|r_{n}||r_{n-1}| & = (1-2^{-2^{n}})(1-2^{-2^{n-1}})\\
&= 1-2^{-2^{n-1}}-2^{-2^{n}}+2^{-2^{n}}\cdot2^{-2^{n-1}}\\
&\geqslant 1-2^{-2^{n-1}}-2^{-2^{n}}\\
&\geqslant 1-2^{-2^{n-1}}-2^{-2^{n-1}}\\
&= 1-2\cdot 2^{-2^{n-1}}\\
&\geqslant 1-2^{-2^{n-2}}\\
&=r_{n-2}.
\end{align*}
Since $r_{n}\leqslant r_{n+1}$ and $|z|\leqslant1$ we get
$$
r_{n-2}\leqslant|r_{n}z|\leqslant |r_{n+1}z|\leqslant r_{n+1}\;\;\text{ for } n\geqslant2.
$$
Since $\|f\|_{\overline{v}}\leqslant1$, we have $|f(z)|\leqslant \overline{v}(z)^{-1}$ on $\mathbb{D}$. Thus we get by the above, since $\overline{v}$ is non-increasing and by (1)
\begin{align*}
|g_{n}(z)|&\stackrel{\text{\tiny dfn}}{=}|f(r_{n+1})-f(r_{n})|\\
 & \leqslant |f(r_{n}z)|+f(r_{n+1}z)|\\
& \leqslant \overline{v}(r_{n}z)^{-1}+\overline{v}(r_{n+1}z)^{-1}\\
& \leqslant \;\;\;\;2\!\!\!\!\!\!\!\sup_{r_{n-2}\leqslant r\leqslant r_{n+1}}\!\!\!\!\!\!\!\overline{v}(r)^{-1}\\
& = \;\;\;\:2\!\!\!\!\!\!\!\!\!\!\sup_{r\in I_{n-2}\cup I_{n-1}\cup I_{n}}\!\!\!\!\!\!\!\!\!\!\!\overline{v}(r)^{-1}\\
& =2\max\big(\sup_{r\in I_{n-2}}\overline{v}(r)^{-1},\,\sup_{r\in I_{n-1}}\overline{v}(r)^{-1},\,\sup_{r\in I_{n}}\overline{v}(r)^{-1}\big)\\
&\leqslant 2 \overline{v}(r_{n+1})^{-1}\\
&\leqslant 2 A^2 \overline{v}(r_{n})^{-1}\\
&= 2A^2 a_{m}v_{m}(r_{n})^{-1}
\end{align*}
where the last equality follows since $\overline{v}(r_{n})=a_{k(n)}^{-1}v_{k(n)}(r_{n})$ and $n\in N_{m}$ implies $i(n)=m$ (cf.~(2)).\vspace{3pt}
\item[b.] Let $n=1$. In this case we have
\begin{align*}
|g_1(z)|=|f(r_2z)-f(r_1z)|&\leqslant |f(r_2z)|+|f(r_1z)|\\
&\leqslant \overline{v}(r_2z)^{-1}+\overline{v}(r_1z)^{-1}\\
&\leqslant 2\sup_{r_0\leqslant r\leqslant r_2}\overline{v}(r)^{-1}\\
&=2\sup_{r\in I_0\cup I_1}u(r)^{-1}\\
&=2\max\big(\sup_{r\in I_0}u(r)^{-1},\,\sup_{r\in I_1}\overline{v}(r)^{-1}\big)\\
&=2\overline{v}(r_2)^{-1}\\
&\stackrel{\scriptscriptstyle(1)}{\leqslant}2 A^2 \overline{v}(r_1)^{-1}\\
&=2A^2a_{m}v_{m}(r_1)^{-1}
\end{align*}
where the last equality follows as above.\vspace{3pt}
\item[c.] Let $n=0$. Then we have $|g_{n}(z)|=|f(0)|$ and $\|f\|_{\overline{v}}\leqslant1$ which implies in particular $\overline{v}(0)|f(0)|\leqslant1$, i.e.
\begin{align*}
|g_{n}(z)|=|f(0)|  \leqslant \overline{v}(0)^{-1}&= \overline{v}(r_0)^{-1}\\
& = a_{k(0)}v_{k(0)}(r_0)^{-1}\\
& \leqslant 2A^2a_{k(n)}v_{k(n)}(r_{n})^{-1}\\
& =2A^2a_{m}v_{m}(r_{n})^{-1}
\end{align*}
 by (2), since $A>1$ and by our selection $n\in N_{m}$.\vspace{3pt}
\end{compactitem}
To sum up the results of the cases a., b.~and c., we have
$$
|g_{n}(z)|\leqslant 2A^2 a_{m}v_{m}(r_{n})^{-1}
$$
for $|z|\geqslant r_{n-1}$ and $n\geqslant0$.\vspace{5pt}
\item[2.] Assume now that $n>t+1$ and $|z|\in I_t$, i.e.~$r_t\leqslant|z|\leqslant r_{t+1}$. Then we have $|g_{n}(z)|=|f(r_{n}z)-f(r_{n+1}z)|$ by definition. By the mean value theorem there exists $\xi$ between $r_{n}z$ and $r_{n+1}z$ with
$$
|f(r_{n}z)-f(r_{n+1})|=|f'(\xi)||r_{n}z-r_{n+1}z|\leqslant|f'(\xi)||r_{n}-r_{n+1}|.
$$
Hence we may estimate
\begin{align*}
|g_{n}(z)| & \leqslant \sup_{|r_{n}z|\leqslant|\xi|\leqslant|r_{n+1}z|}|f'(\xi)||r_{n}-r_{n+1}|\\
&\leqslant \sup_{r_{n}r_t\leqslant|\xi|\leqslant r_{n+1}r_{t+1}}|f'(\xi)|2^{-2^{n}},
\end{align*}
since $|r_{n+1}-r_{n}|=1-2^{-2^{n+1}}-1+2^{-2^{n}}\leqslant 2^{-2^{n}}$. $n>t+1$, i.e.~$t<n-1$ implies $|\xi|\leqslant r_{n+1}r_{t+1}<r_{t+1}\leqslant r_{n}$ and we thus may use the Cauchy formula
\begin{align}
|f'(\xi)|\leqslant{\textstyle\frac{1}{2\pi}}\int_{|\eta|=r_{n}}{\textstyle\frac{|f(\eta)|}{|\eta-\xi|^2}}d\eta
\end{align}
to estimate $|f'(\xi)|$. We have $|f(\eta)|\leqslant \overline{v}(\eta)^{-1}=\overline{v}(r_{n})^{-1}$, since $\|f\|_{\overline{v}}\leqslant1$ and $\overline{v}$ is radial. Now we estimate $\frac{1}{|\eta-\xi|^{2}}$.\vspace{3pt}
\begin{compactitem}
\item[a.] Let $n>t+2$. That is, $n\geqslant t+3$, i.e.~$t\leqslant n-3$. Hence $|\xi|\leqslant r_{n+1}r_{t+1}\leqslant r_{n+1}r_{n-2}\leqslant r_{n-2}$. Now, $|\eta-\xi|\geqslant\big||\eta|-|\xi|\big|\geqslant |\eta|-|\xi|\geqslant r_{n}-r_{n-2}=1-2^{-2^{n}}-1+2^{-2^{n-2}}=2^{-2^{n-2}}-2^{-2^{n}}$. We claim that $2^{-2^{n-2}}-2^{-2^{n}}\geqslant 2^{-1}2^{-2^{n-2}}$ holds. We clearly have $2^{n}-2^{n-2}\geqslant1$, i.e.~$2^{n}-1\geqslant 2^{n-2}$ and thus $2^{2^{n}-1}\geqslant 2^{2^{n-2}}$, therefore $2^{1-2^{n}}\leqslant2^{-2^{n-2}}$ and thus $-2\cdot2^{2^{n}}=-2^{1-2^{n}}\geqslant-2^{-2^{n-2}}$. This implies $2\cdot2^{-2^{n-2}}-2\cdot2^{-2^{n}}\geqslant2\cdot2^{-2^{n-2}}-2^{-2^{n-2}}=2^{-2^{n-2}}$ which shows the claim. Thus we have $|\eta-\xi|\geqslant 2^{-1}2^{-2^{n-2}}$ hence $\frac{1}{|\eta-\xi|}\leqslant 2\cdot2^{2^{n-2}}$ which yields $\frac{1}{|\eta-\xi|^2}\leqslant 2^2\cdot2^{2\cdot2^{n-2}}=4\cdot2^{2^{n-1}}$. Now we get
$$
|f'(\xi)|\leqslant{\textstyle\frac{2\pi r_{n}}{2\pi}}\cdot4\cdot2^{2^{n-1}}\overline{v}(r_{n})^{-1}\leqslant 4\cdot2^{2^{n-1}}\overline{v}(r_{n})^{-1}
$$
from (3) since $r_{n}\leqslant1$ and can continue the estimate of $|g_{n}(z)|$, i.e.
\begin{align*}
|g_{n}(z)|& \leqslant 4\cdot 2^{2^{n-1}}2^{-2^{n}}\overline{v}(r_{n})^{-1}\\
               & = 4\cdot 2^{2^{n-1}-2^{n}}\overline{v}(r_{n})^{-1}\\
               & = 4\cdot 2^{2^{n-1}(1-2^{1})}\overline{v}(r_{n})^{-1}\\
               & = 4\cdot2^{-2^{n-1}}\overline{v}(r_{n})^{-1}\\
               & = 4\cdot2^{2^{n-1}}a_{m}v_{m}(r_{n})^{-1}
\end{align*}
where the last equality is obtained as in the previous cases.\vspace{3pt}
\item[b.] Let $n=t+2$, that is $t=n-2$ and hence $|\xi|\leqslant r_{n+1}r_{t+1}\leqslant r_{n+1}r_{n-1}\leqslant r_{n-1}$.  Similar to the above we have $|\eta-\xi|\geqslant r_{n}-r_{n-1}=1-2^{-2^{n}}-1+2^{-2^{n-1}}=2^{-2^{n-1}}-2^{-2^{n}}$ and we claim that $2^{-2^{n-1}}-2^{-2^{n}}\geqslant 2^{-1}2^{-2^{n-1}}$ holds. We clearly have $2^{n}-2^{n-1}\geqslant1$, i.e.~$2^{n}-1\geqslant 2^{n-1}$ and thus $2^{2^{n}-1}\geqslant 2^{2^{n-1}}$, therefore $2^{1-2^{n}}\leqslant2^{-2^{n-1}}$ and thus $-2\cdot2^{2^{n}}=-2^{1-2^{n}}\geqslant-2^{-2^{n-1}}$. This implies $2\cdot2^{-2^{n-1}}-2\cdot2^{-2^{n}}\geqslant2\cdot2^{-2^{n-1}}-2^{-2^{n-1}}=2^{-2^{n-1}}$ which shows the claim. Similar to the above, we get $|\eta-\xi|\geqslant2^{-1}2^{-2^{n-1}}$ and hence $\frac{1}{|\eta-\xi|}\leqslant2\cdot2^{2^{n-1}}$ which yields $\frac{1}{|\eta-\xi|^2}\leqslant 2^2\cdot 2^{2\cdot2^{n-1}}=4\cdot2^{2^{n}}$. We get
$$
|f'(\xi)|\leqslant{\textstyle\frac{2\pi r_{n}}{2\pi}}\cdot4\cdot2^{2^{n}}\overline{v}(r_{n})^{-1}\leqslant 4\cdot2^{2^{n}}\overline{v}(r_{n})^{-1}
$$
from (3) since $r_{n}\leqslant1$ and can also in this case continue the estimate of $|g_{n}(z)|$, i.e.
$$
|g_{n}(z)|\leqslant 4\cdot 2^{2^{n}}\overline{v}(r_{n})^{-1}2^{-2^{n}}=4\overline{v}(r_{n})^{-1}=4a_{m}v_{m}(r_{n})^{-1}
$$
by the choice $n\in N_{m}$.\vspace{3pt}
\end{compactitem}
Now we use (LOG 1) $(n-t)-$times to obtain
$$
v_{m}(r_t)\leqslant Av_{m}(r_{n+1})\leqslant\cdots\leqslant A^{n-t}v_{m}(r_{t+n-t})= A^{n-t}v_{m}(r_{n}).
$$
Since $|z|\geqslant r_t$ and because $v_{m}$ is radial and decreasing for $r\nearrow 1$ we have $v_{m}(r_t)\geqslant v_{m}(z)$ and thus we get $v_{m}(z)\leqslant v_{m}(r_{t})\leqslant A^{n-t}v_{m}(r_{n})$, which finally yields $v_{m}(r_{n})^{-1}\leqslant A^{n-t}v_{m}(z)^{-1}$. We continue the estimates in a.~and b.
\smallskip
\begin{compactitem}
\item[c.] Let $n>t+2$. From the latter and our estimate in a.~we get $|g_{n}(z)|\leqslant 4a_{m}v_{m}(z)^{-1}A^{n-t}2^{-2^{n-1}}$. By our selection of $D$ we get $A^{n-t}2^{-2^{n-1}}\leqslant D2^{-n}$ and therefore $|g_{n}(z)|\leqslant4\cdot2^{-n}Da_{m}v_{m}(z)^{-1}$.\vspace{3pt}
\item[d.] Let $n=t+2$. Then the above yields $|g_{n}(z)|\leqslant 4a_{m}v_{m}(z)^{-1}A^{2}$.
\end{compactitem}
\smallskip
To sum up the results of 2., we have
$$
|g_{n}(z)|\leqslant 4a_{m}v_{m}(z)^{-1}\,
\begin{cases}
\;2^{-n}D & \text{if }\;n>t+2\\
\;\;\,A^{2} & \text{if }\;n=t+2
\end{cases}
$$
for $|z|\in I_t$ and $n$ as indicated above.
\end{compactitem}
\medskip
To complete the proof, let now $z\in\mathbb{D}$ be arbitrary. We select $t\in\mathbb{N}$ such that $|z|\in I_t=[r_t,r_{t+1}]$. Then
$$
|h_{m}(z)|\stackrel{\text{\tiny dfn}}{=}\big|\sum_{n\in N_{m}}g_{n}(z)\big|\leqslant\sum_{\stackrel{n\in N_{m}}{\scriptscriptstyle n\leqslant t+1}}|g_{n}(z)|+\sum_{\stackrel{n\in N_{m}}{\scriptscriptstyle n>t+1}}|g_{n}(z)|\,=:\,G_{m}(z)+H_{m}(z).
$$
\begin{compactitem}
\item[(i)] We consider $G_{m}(z)$, that is all occuring $n$ satisfy $0\leqslant n\leqslant t+1$ and $n\in N_{m}$. Thus we have $n-1\leqslant t$, hence $|z|\geqslant r_t\geqslant r_{n-1}$ (remember that we defined $r_{-1}:=r_0=0$). By the estimate obtained in 1.~we therefore have
$$
G_{m}(z)\stackrel{\text{\tiny dfn}}{=}\sum_{\stackrel{n\in N_{m}}{\scriptscriptstyle n\leqslant t+1}}|g_{n}(z)|\leqslant \sum_{\stackrel{n\in N_{m}}{\scriptscriptstyle n\leqslant t+1}}2A^2a_{m}v_{m}(r_{n})^{-1}.
$$
(LOG 2) implies $v_{m}(r_{n+1})\leqslant a v_{m}(r_{n})$, i.e.~$v_{m}(r_{n})^{-1}\leqslant a v_{m}(r_{n+1})^{-1}$ for arbitrary $n$. Iterating this estimate $t-n$ times for a fixed $n\leqslant t$ we get
$$
u_{m}(r_{n})^{-1}\leqslant a u_{m}(r_{n+1})^{-1}\leqslant\cdots\leqslant a^{t-n}u_{m}(r_{n+t-n})^{-1}=a^{t-n}u_{m}(r_{t})^{-1}.
$$
With the latter we may estimate
\begin{align*}
\sum_{\stackrel{n\in N_{m}}{\scriptscriptstyle n\leqslant t+1}}2A^2a_{m}v_{m}(r_{n})^{-1}
& \leqslant \sum_{n\leqslant t+1}2A^2a_{m}v_{m}(r_{n})^{-1}\\
& = 2A^2a_{m}\big(\sum_{n=0}^tv_{m}(r_{n})^{-1} + v_{m}(r_{t+1})^{-1}\big) \\
&\leqslant 2A^2a_{m}\big(\sum_{n=0}^ta^{t-n}v_{m}(r_t)^{-1} + A^2u_{m}(r_{t})^{-1}\big) \\
&= 2A^2a_{m}v_{m}(r_t)^{-1}\big(\sum_{\sigma=0}^ta^{\sigma} + A^2\big) \\
&\leqslant 2A^2a_{m}v_{m}(r_t)^{-1}\big(\sum_{\sigma=0}^{\infty}a^{\sigma} + A^2\big) \\
&\leqslant 2A^2(D+A^2)a_{m}v_{m}(z)^{-1}
\end{align*}
where we used that $D\geqslant\sum_{n\in\mathbb{N}}a^{n}$, that $v_{m}$ is radial and decreasing for $r\nearrow1$ and $|z|\geqslant r_t$, whence $v_{m}(r_t)^{-1}\leqslant v_{m}(z)^{-1}$. Thus we have
$$
G_{m}(z)\leqslant 2A^2(D+A^2)a_{m}u_{m}(z)^{-1}.
$$
\item[(ii)] We consider $H_{m}(z)$. Then all the occuring $n$ satisfy $n>t+1$ and $n\in N_{m}$. By the estimates in 2.~we obtain
\begin{align*}
H_{m}(z) \stackrel{\text{\tiny dfn}}{=}\sum_{\stackrel{n\in N_{m}}{\scriptscriptstyle n>t+1}}|g_{n}(z)| &= \delta_{k(t+2),m}\,|g_{t+2}| + \sum_{\stackrel{n\in N_{m}}{\scriptscriptstyle n>t+2}}|g_{n}(z)|\\
&\leqslant 4a_{m}v_{m}(z)^{-1}A^2 + \sum_{\stackrel{n\in N_{m}}{\scriptscriptstyle n>t+2}}4\cdot2^{-n}Da_{m}v_{m}(z)^{-1}\\
&\leqslant\big(4A^2+4D\sum_{n=0}^{\infty}2^{-n}\big)a_{m}v_{m}(z)^{-1}\\
&=4(A^2+2D)a_{m}v_{m}(z)^{-1},
\end{align*}
where $\delta$ denotes the Kronecker symbol.
\end{compactitem}
\medskip
Combining the estimates in (i) and (ii) we obtain
\begin{align*}
|h_{m}(z)|&=G_{m}(z)+H_{m}(z)\\
&\leqslant(2A^2(D+A^2)+ 4(A^2+2D))a_{m}v_{m}(z)^{-1}\\
&< C2^{-(m+2)}C^{-1}b_mv_{m}(z)^{-1}\\
&=2^{-(m+2)}b_mv_m(z)^{-1}
\end{align*}
that is $v(z)|h_m(z)|\leqslant2^{-(m+2)}b_m$ and thus
$$
h_m\in 2^{-(m+2)}b_mU_m.
$$
as desired.
\end{proof}

\begin{ack} The author thanks Pepe Bonet for answering many questions, all his useful hints and several clarifying discussions on the proof of \cite[Theorem 5]{BoEnTa2005}.
\end{ack}

\providecommand{\bysame}{\leavevmode\hbox to3em{\hrulefill}\thinspace}
\providecommand{\MR}{\relax\ifhmode\unskip\space\fi MR }
% \MRhref is called by the amsart/book/proc definition of \MR.
\providecommand{\MRhref}[2]{%
  \href{http://www.ams.org/mathscinet-getitem?mr=#1}{#2}
}
\providecommand{\href}[2]{#2}

\vspace{10pt}
Author's adress: Sven-Ake Wegner, Mathematical Institute, University of Paderborn, D-33095 Paderborn, Germany. E-mail address: \texttt{wegner@math.upb.de}
\end{document}